\documentclass{article}
\usepackage[utf8]{inputenc}
\usepackage{amsmath, amssymb, enumerate, multicol, chngcntr}
\usepackage{amsthm}
\usepackage{xcolor}
\usepackage{pdfpages}
\usepackage{comment}
\usepackage{caption}
\usepackage{bm}
\usepackage{mathtools}

\theoremstyle{plain}
\newtheorem{thm}{Theorem}[section]
\newtheorem{cor}[thm]{Corollary}
\newtheorem{prop}[thm]{Proposition}

\newtheorem{rem}{Remark}

\counterwithin{equation}{section}

\usepackage{fancyhdr}
\usepackage{setspace}

\usepackage[english]{babel}
\usepackage[euler]{textgreek}
\usepackage{stmaryrd}

\usepackage{tikz}
\usetikzlibrary{shapes,arrows}
\usepackage{tikz-cd}
\usepackage{amssymb}
\usepackage{graphicx}
\graphicspath{{Images/}}
\usepackage[font=small,labelfont=bf]{caption}
\usepackage{pgf,tikz,pgfplots}
\pgfplotsset{compat=1.15}
\usepackage{mathrsfs}
\usepackage{diagbox}
\usepackage{float}
\usepackage{xcolor}
\usepackage{ragged2e}
\usepackage{enumerate}
\usepackage{upgreek}
\usepackage{multicol}

\theoremstyle{remark}

\theoremstyle{definition}
\newtheorem{definition}[thm]{Definition}

\theoremstyle{plain}

\usepackage[hidelinks]{hyperref}
\usepackage{bookmark}
\usepackage{cleveref}

\hypersetup{
	colorlinks   = true, 
	urlcolor     = blue, 
	linkcolor    = blue, 
	citecolor   = red, 
	citecolor   = blue,
}

\newtheoremstyle{note}
{3pt}
{3pt}
{}
{}
{\itshape}
{:}
{.5em}
{}

\newtheoremstyle{citing}
{3pt}
{3pt}
{\itshape}
{}
{\bfseries}
{.}
{.5em}
{\thmnote{#3}}

\theoremstyle{citing}

\newtheoremstyle{break}
{9pt}
{9pt}
{\itshape}
{}
{\bfseries}
{.}
{\newline}
{}

\let\lvert=|\let\rvert=|

\title{On the normal subgroups of a split extension}
\author{Prashun Kumar \footnote{Corresponding author, Dr. B. R. Ambedkar University Delhi, Delhi 110006, India; \ E-mail:  prashun07kumar@gmail.com.}}

\begin{document}
	\fontfamily{cmr}\selectfont
	
	\maketitle
	
	\bigskip
	\noindent
	{\small{\bf ABSTRACT:}} Let $N$ and $Q$ be a finite groups with $\gcd(|N|,|Q|) = 1$. In this paper we describe normal subgroups of $G =  N \rtimes Q$ via normal subgroups of $N$ and $Q$. Let $p$ and $q$ be distinct primes. Let $\mathfrak{A}_p$ be the variety of  elementary abelian $p$-groups. Let $\mathfrak{A}_p\mathfrak{A}_q$ be the variety of extensions of a group in $\mathfrak{A}_p$ by a group in $\mathfrak{A}_q$. We also provide a method of determining the normal subgroups of a group in the variety $\mathfrak{A}_p\mathfrak{A}_q$. We also provide the complete list of normal subgroups of a finite group with cyclic Sylow subgroups.
	
	\medskip
	\noindent
	{\small{\bf Keywords}{:}}
	normal subgroups, general linear groups, solvable groups, nilpotent groups, variety of $A$-groups, $C$-groups.
	
	\medskip
	\noindent
	{\small{\bf Mathematics Subject Classification-MSC2020}{:} }
	20E28, 20E34, 20E45, 20F99. 
	
	\baselineskip=\normalbaselineskip
	
	\section{Introduction}
	V. M. Usenko in 1991 described the subgroups of a given split extension and he also provided the necessary condition for a subgroup of a split extension $G$ to be normal in $G$ (see \cite{U1991}). In this paper we consider a split extension of the form $G = N \rtimes Q$ where $N$ and  $Q$ are finite groups  of co-prime order and describe the normal subgroups of $G$ in terms of normal subgroups of $N$ and $Q$. Additionally as an application of our main result we provide complete list of normal subgroups of a finite group with cyclic Sylow subgroups. We refer to Dietrich-Low \cite{DL2021} for a recent discussion on groups with cyclic Sylow subgroups. Since Dietrich-Low used the term $C$-group for a finite group having cyclic Sylow subgroups we also call a finite group with cyclic Sylow subgroups a $C$-group.
	
	 Let $\mathfrak{A}_p$ and $\mathfrak{A}_q$ be the varieties consisting of the elementary abelian $p$- and $q$-groups where $p$ and $q$ are distinct primes. Let $\mathfrak{A}_p\mathfrak{A}_q$ be the variety consisting of extensions of a group in $\mathfrak{A}_p$ by a group in $\mathfrak{A}_q$. In this paper we provide a method to determine the normal subgroups of a group in the variety $\mathfrak{A}_p\mathfrak{A}_q$. Further we also provide complete list of normal subgroups of a group in the variety $\mathfrak{A}_p\mathfrak{A}_q$ in some special cases. For the background on the theory of varieties of groups we refer to the classical book by H. Neumann (see \cite{HN1967}). 
	
	\medskip
	Throughout the article $p$ and $q$ are distinct primes. ${\rm GL}(n,p)$ is the group of $n\times n$ non-singular matrices over the finite field of order $p$. Let $G$ be a group and let $a,x \in G$. Then we denote the conjugate $xax^{-1}$ as $a^x$ and the commutator $axa^{-1}x^{-1}$ as $[a,x]$.  Let $S$ and $T$ be subsets of $G$. Then we denote $[S,T]$ the subgroup of $G$ generated by the set $\{[a,x]\mid a \in S $ and $x \in T\}$. Let $K$ be a subgroup of $G$. Then $C_K(T) = C_G(T)\cap K$. We write $K \leq_{char} G$ if $K$ is  a characteristic subgroup of $G$. By transversal of $K$ in $G$, we always mean right transversal of $K$ in $G$. 
	
	\section{Normal Subgroups of a split extension}
	
	In this section we prove some results concerning normal subgroups of a split extensions and prove our main result. 
	
	\medskip
	We begin by providing necessary and sufficient conditions for a subgroup of certain type to be normal in a split extension.
	
	\begin{prop}{\label{Normal_subgroups}}
		Let $G = N \rtimes Q$. Let $S = \hat{N} \rtimes \hat{Q}$ be the subgroup of $G$ where $\hat{N} \leq N$ and $\hat{Q} \leq Q$. Then $S$ is normal in $G$ if and only if $\hat{N} \trianglelefteq N$, $\hat{Q} \trianglelefteq Q$, $[N,\hat{Q}] \leq \hat{N}$ and $[\hat{N},Q] \leq \hat{N}$. 
	\end{prop}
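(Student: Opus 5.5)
We prove the two directions separately, working throughout with the internal description $G = NQ$, $N \trianglelefteq G$, $N \cap Q = 1$. Here $S = \hat{N}\hat{Q}$ with $\hat{N} = S \cap N$. Since $\hat{N}$ is normalized by $\hat{Q}$, we also have $\hat{Q} \cong S/\hat{N}$. The key tool is the commutator convention $[a,x] = axa^{-1}x^{-1}$. Under it, $[a,x] \in \hat{N}$ says exactly that $a^x$ and $a$ agree modulo $\hat{N}$, up to the order of multiplication.

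\emph{Necessity.} Suppose $S \trianglelefteq G$.
\begin{enumerate}
\item Since $S \cap N = \hat{N}$ (every element of $S$ is $nq$ with $n\in\hat N$, $q\in\hat Q$, and $nq\in N$ forces $q=1$), the subgroup $\hat{N}$ is an intersection of two normal subgroups of $G$. Hence $\hat{N} \trianglelefteq G$, which gives both $\hat{N} \trianglelefteq N$ and $[\hat{N},Q] \leq \hat{N}$.
\item Take $n \in N$ and $x \in \hat{Q}$. Then $[n,x] = n\,x n^{-1}x^{-1}$ lies in $N$, because $N$ is normal. It also lies in $S$, because $x \in S$ and $S$ is normal, so $n x n^{-1} \in S$ and hence $n(xn^{-1}x^{-1}) = (nxn^{-1})x^{-1} \in S$. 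Therefore $[n,x] \in S \cap N = \hat{N}$, i.e.\ $[N,\hat{Q}] \leq \hat{N}$.
\item For $\hat{Q} \trianglelefteq Q$, project along the homomorphism $\pi: G \to G/N \cong Q$. The image $\pi(S) = \hat{Q}$ is normal in $\pi(G) = Q$.
\end{enumerate}

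\emph{Sufficiency.} Assume the four conditions. Since $G = NQ$, it suffices to check that $S$ is normalized by $N$ and by $Q$ separately. The normalizer of $S$ is a subgroup, so it is enough to check that conjugating each generator from $\hat{N} \cup \hat{Q}$ lands in $S$.
\begin{enumerate}
\item Conjugating by $q \in Q$: we have $\hat{N}^q \subseteq \hat{N}$ because $[\hat{N},Q] \leq \hat{N}$, and $\hat{Q}^q = \hat{Q}$ because $\hat{Q} \trianglelefteq Q$.
\item Conjugating by $n \in N$: we have $\hat{N}^n = \hat{N}$ by normality in $N$. For $x \in \hat{Q}$, write $n x n^{-1} = [n,x]\,x$. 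Here $[n,x] \in \hat{N}$ by hypothesis, so $x^n \in \hat{N}\hat{Q} = S$.
\end{enumerate}

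The only delicate point is bookkeeping with the paper's conventions, $a^x = xax^{-1}$ and $[a,x]=axa^{-1}x^{-1}$. One must confirm that the identity $nxn^{-1} = [n,x]x$ holds exactly as written; it does, since $[n,x]\,x = nxn^{-1}x^{-1}x$. One must also confirm that $[\hat{N},Q]\le\hat{N}$ really encodes $Q$-invariance of $\hat{N}$: from $[a,x] = a\,(xa^{-1}x^{-1})$ we get $x a^{-1} x^{-1} \in \hat{N}$, hence $x a x^{-1} \in \hat{N}$. Everything else is routine. Note that coprimality is not needed for this proposition.
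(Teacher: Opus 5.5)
Your proof is correct and follows essentially the same route as the paper. The paper's ``unique representation'' argument is exactly your observation $S\cap N=\hat N$, which gives both $\hat N\trianglelefteq G$ and $[n,x]\in\hat N$, and the converse is the same check that $\hat N$ is $G$-invariant and that $x^n=[n,x]x\in S$. The only cosmetic difference is that you obtain $\hat Q\trianglelefteq Q$ by projecting to $G/N\cong Q$, where the paper implicitly uses $S\cap Q=\hat Q$; both work.
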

	
	\begin{proof}
		Suppose the $S$ is normal in $G$. Then for $a \in N$ we have $\hat{N}^{a} \leq S$. Further $\hat{N}^{a}\leq N$ and every element of $\hat{N}^a$ has a unique representation in $G$. Therefore $\hat{N}^a = \hat{N}$ and $\hat{N}\trianglelefteq N$. Similarly it can be shown that $\hat{N}$ is $Q$-invariant. Therefore $\hat{N}$ is normal in $G$ and $[\hat{N},Q] \leq \hat{N}$. Using similar line of arguments it can be shown that $\hat{Q} \trianglelefteq Q$. Also for $x \in N$ we have $\hat{Q}^x \leq S$. Again every element of $\hat{Q}^x$ has a unique representation in $G$ and it can be shown that $[N,\hat{Q}] \leq \hat{N}$. Conversely the conditions $\hat{N}\trianglelefteq N$ and $[\hat{N},Q] \leq \hat{N}$ together imply that $\hat{N}$ is normal in $G$. Further the conditions $\hat{Q} \trianglelefteq Q$ and $[N,\hat{Q}]\leq \hat{N}$ imply that all conjugates of $\hat{Q}$ are contained in $S$.   
	\end{proof}
	
	The conditions $\hat{N} \trianglelefteq N$ and $[\hat{N},Q]$ in Proposition \ref{Normal_subgroups} can be combined into one $N \trianglelefteq G$. This observation leads to the following corollary. 
	
	\begin{cor}{\label{cor_to_normal_subgroups}}
		Let $G = N \rtimes Q$. Let $S = \hat{N} \rtimes \hat{Q}$ be the subgroup of $G$ where $\hat{N} \leq N$ and $\hat{Q} \leq Q$. Then $S$ is normal in $G$ if and only if $\hat{N} \trianglelefteq G$, $\hat{Q} \trianglelefteq Q$ and $[\hat{N},Q] \leq \hat{N}$. In particular $[N,Q] \rtimes \hat{Q}$ is normal in $G$ for any normal subgroup $\hat{Q}$ of $Q$.
	\end{cor}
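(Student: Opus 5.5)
The plan is to deduce the corollary directly from Proposition \ref{Normal_subgroups} by showing that, for $\hat{N} \leq N$, the pair of conditions ``$\hat{N} \trianglelefteq N$ and $[\hat{N},Q] \leq \hat{N}$'' is equivalent to the single condition ``$\hat{N} \trianglelefteq G$''. Since $G = NQ$, every element of $G$ is $ax$ with $a \in N$, $x \in Q$. Hence $\hat{N}$ is normal in $G$ exactly when it is invariant under conjugation by $N$ and by $Q$. Invariance under $N$ is $\hat{N} \trianglelefteq N$. Invariance under $Q$ is $\hat{N}^x = \hat{N}$ for all $x \in Q$. Writing $n^x = [n,x]\,x n x^{-1} \ldots$ in the paper's convention, $n^x = xnx^{-1} = [x,n]n$, so $\hat{N}^x \leq \hat{N}$ is equivalent to $[\hat{N},Q] \leq \hat{N}$. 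Finiteness, or applying the same to $x^{-1}$, upgrades the inclusion to equality.

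Substituting this equivalence into Proposition \ref{Normal_subgroups}, $S$ is normal in $G$ if and only if $\hat{N} \trianglelefteq G$, $\hat{Q} \trianglelefteq Q$, and the remaining condition of the proposition holds. That remaining condition is $[N,\hat{Q}] \leq \hat{N}$. I read the third condition in the corollary as this one: the condition $[\hat{N},Q] \leq \hat{N}$ as literally printed is already implied by $\hat{N} \trianglelefteq G$, so the intended statement must carry $[N,\hat{Q}] \leq \hat{N}$. I would prove that version and remark on it.

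For the ``in particular'' part, take $\hat{N} = [N,Q]$. Here $[N,Q] \leq N$ because $N \trianglelefteq G$.

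\begin{itemize}
\item First check $[N,Q] \trianglelefteq G$. By the standard fact that $[H,K]$ is normalized by both $H$ and $K$ (via the identities $[ab,x] = [b,x]^a[a,x]$ and the analogous one in the second variable), $[N,Q]$ is normalized by $N$ and by $Q$, hence by $NQ = G$.
\item Second, for $\hat{Q} \trianglelefteq Q$ we have $[N,\hat{Q}] \leq [N,Q]$ trivially.
\end{itemize}

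Thus all three conditions hold and $[N,Q] \rtimes \hat{Q} \trianglelefteq G$. The semidirect product is well defined since $[N,Q] \cap \hat{Q} \leq N \cap Q = 1$ and $\hat{Q}$ normalizes $[N,Q]$.

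The only step needing care is the commutator identity showing $[N,Q]$ is normalized by $N$ and $Q$; everything else is bookkeeping from Proposition \ref{Normal_subgroups}.
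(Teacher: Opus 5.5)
Your proposal is correct and follows the paper's own route: combine $\hat N\trianglelefteq N$ and $[\hat N,Q]\le\hat N$ into $\hat N\trianglelefteq G$, then read the rest off Proposition~\ref{Normal_subgroups}. You are also right that the third condition must be $[N,\hat Q]\le\hat N$, which is what the paper's one-line proof tacitly uses; as printed, the statement is actually false (take $\hat N=1$, $\hat Q=Q$ acting nontrivially on $N$). Your check of the ``in particular'' clause, using the standard fact that $[N,Q]$ is normalized by $N$ and by $Q$ together with $[N,\hat Q]\le[N,Q]$, supplies a step the paper leaves implicit.
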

	
	\begin{proof}
		Since the conditions $\hat{N}\trianglelefteq N$ and $[\hat{N},Q] \leq \hat{N}$ hold if and only if $\hat{N}$ is normal in $G$ the result follows from Proposition \ref{Normal_subgroups}.
	\end{proof}

	Now we state and prove our main result. 
	 
     \begin{thm}{\rm (\bf Main Result)}{\label{normal_subgroup_of_a_split_extension}}
		Let $N$ and $Q$ be finite groups such that $\gcd(|N|,|Q|)=1$. Let $G = N \rtimes Q$ and let $S$ be a normal subgroup of $G$. Then one of the following holds.
		
		\begin{enumerate}[$(\rm i)$]
			\item $S$ is a $Q$-invariant normal subgroup of $N$.
 			\item $S$ is a subgroup of $C_Q(N)$.
			\item $S =  \hat{N} \rtimes \hat{Q}$ where $\hat{N} \leq N$ is normal in $G$ and contains a transversal of the subgroup $C_N(\hat{Q})$ in $N$ and $\hat{Q}$ is a normal subgroup of $Q$. 
		\end{enumerate}
		Conversely the subgroups of $G$ described in $(\rm i)$, $(\rm ii)$ and $(\rm iii)$ are normal in $G$. 
	\end{thm}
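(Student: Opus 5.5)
Since $\gcd(|N|,|Q|)=1$, $N$ is a normal Hall subgroup of $G$. The plan is to split $S$ along $N$ and then recognise the complement part as lying in a conjugate of $Q$.

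\textbf{Step 1: the $N$-part.} Put $\hat N = S\cap N$. It is normal in $G$, being an intersection of normal subgroups. Its image $SN/N\cong S/\hat N$ is isomorphic to a subgroup of $Q$, so $|S/\hat N|$ divides $|Q|$ and is coprime to $|\hat N|$. By Schur--Zassenhaus, $S=\hat N\rtimes Q_0$ for a complement $Q_0$ with $|Q_0|$ dividing $|Q|$.

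\textbf{Step 2: conjugate the complement into $Q$.} The subgroup $NQ_0$ also has $N$ as a normal Hall subgroup, and both $Q_0$ and $(NQ_0)\cap Q$ complement $N$ there. All complements of $N$ are conjugate (Schur--Zassenhaus; solvability of one factor is automatic by Feit--Thompson, or one can use Gaschütz's argument). So $Q_0^g\le Q$ for some $g\in G$. Since $S\trianglelefteq G$, replacing $Q_0$ by $Q_0^g$ gives $S=\hat N\rtimes\hat Q$ with $\hat Q=Q_0^g\le Q$. Now Corollary \ref{cor_to_normal_subgroups} applies: $\hat Q\trianglelefteq Q$ and $[N,\hat Q]\le\hat N$.

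\textbf{Step 3: case split.}
\begin{itemize}
\item If $\hat Q=1$, then $S=\hat N$ is a $Q$-invariant normal subgroup of $N$, which is case (i).
\item If $\hat N=1$, then $[N,\hat Q]=1$, so $S=\hat Q\le C_Q(N)$, which is case (ii).
\item Otherwise, we must show $\hat N$ contains a transversal of $C_N(\hat Q)$ in $N$, i.e.\ $N=\hat N\,C_N(\hat Q)$.
\end{itemize}

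\textbf{The main obstacle is the third case.} The plan is a coprime-action argument on $\bar N=N/\hat N$. The group $\hat Q$ acts on $\bar N$, and that action is trivial because $[N,\hat Q]\le\hat N$. Coprime action then gives $C_{\bar N}(\hat Q)=C_N(\hat Q)\hat N/\hat N$, and the left side is all of $\bar N$. Hence $N=C_N(\hat Q)\hat N$, which is the transversal condition. If a more elementary route is preferred, one can instead take $n\in N$ and consider the orbit of $n\hat N$ under $\hat Q$. Because $\hat Q$ fixes $n\hat N$, the coset $n\hat N$ is a $\hat Q$-invariant set of size coprime to $|\hat Q|$ on which $\hat Q$ acts via the twisted action. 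A standard Glauberman-type lemma (or Schur--Zassenhaus applied to $\hat N\rtimes\hat Q$ acting on the coset) then produces a fixed point, and that fixed point lies in $C_N(\hat Q)$.

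\textbf{Converse.}
\begin{itemize}
\item Case (i) is immediate: a $Q$-invariant normal subgroup of $N$ is normalised by $N$ and $Q$.
\item Case (ii): $C_Q(N)$ is centralised by $N$, and any subgroup of it that is normal in $Q$ is normal in $G$. Here one must read (ii) as requiring $S\trianglelefteq Q$; under that reading the case is fine.
\item Case (iii): write $n=hc$ with $h\in\hat N$ and $c\in C_N(\hat Q)$. Then $[n,x]$ for $x\in\hat Q$ reduces to an element of $\hat N$, since $\hat N$ is normal and $c$ commutes with $x$. So $[N,\hat Q]\le\hat N$, and Corollary \ref{cor_to_normal_subgroups} gives $S\trianglelefteq G$.
\end{itemize}
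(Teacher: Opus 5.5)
Your proposal is correct and follows the paper's route. You split $S=\hat N\rtimes\hat Q$ with $\hat N=S\cap N$ and $\hat Q\le Q$ via Schur--Zassenhaus, then prove $N=\hat N\,C_N(\hat Q)$. The coprime-action lemma $C_{N/\hat N}(\hat Q)=C_N(\hat Q)\hat N/\hat N$ that you cite is proved by exactly the argument the paper writes out inline: for $t\in N$, $\hat Q^t\le S$ is conjugate to $\hat Q$ by some element of $\hat N$, and $N_N(\hat Q)=C_N(\hat Q)$ because $[N_N(\hat Q),\hat Q]\le N\cap\hat Q=1$.

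Your caveat on the converse of (ii) is a genuine correction. The paper only checks the converse for (iii), and with $N=\mathbb{Z}_5$ and $Q=S_3$ acting trivially, a subgroup of order $2$ of $Q$ lies in $C_Q(N)$ but is not normal in $G$. Likewise, reading Corollary~\ref{cor_to_normal_subgroups} with the condition $[N,\hat Q]\le\hat N$, as in Proposition~\ref{Normal_subgroups}, is the right reading of that corollary.
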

	
	\begin{proof}
		Suppose that $|S| \mid |N|$. Then $S$ is contained in $N$. Since $S$ is a normal subgroup of $G$, $S$ is a $Q$-invariant normal subgroup of $N$.
		
		\medskip
		Now suppose that $|S|\mid |Q|$. Then by Schur-Zassenhaus Theorem $S \leq Q$. Further $S$ acts trivially on $N$ and therefore  $S$ is a subgroup of $C_Q(N)$.
		
		\medskip
		Finally suppose that $|S|= mn$ where $m \mid |N|$ and $n \mid |Q|$. Then $S \cap N$ is the normal Hall $n'$-subgroup of $S$. Therefore by Schur-Zassenhaus Theorem $S = \hat{N} \rtimes \hat{Q}$ where $\hat{N} = S \cap N$ and $\hat{Q}$ is a subgroup of $G$ contained in a conjugate of $Q$. Since $S$ is normal in $G$ we can assume that $\hat{Q} \leq Q$. Let $T$ be a transversal of $C_N(\hat{Q})$ in $N$ and let $t \in T$. Since $\hat{Q}^t \leq S$ by Schur-Zassenhaus Theorem, $\hat{Q}^t$ is conjugate to $\hat{Q}$ in $S$. Therefore $\hat{Q}^t = \hat{Q}^{\hat{a}}$ for some $\hat{a} \in \hat{N}$. But then for any $y \in \hat{Q}$ we have $[t^{-1}\hat{a},y] \in N \cap \hat{Q} = \{1\}$. Thus $\hat{a} = tb$ for some $b \in C_N(\hat{Q})$. So $\hat{a} \in \hat{N}$ is a representative of the coset $tC_N(\hat{Q})$ in $N$ and hence $\hat{N}$ contains a transversal of $C_N(\hat{Q})$ in $N$.
        
        \medskip		
		
		 Conversely, since $\hat{N}$ is normal in $G$ we have $[\hat{N},Q] \leq \hat{N}$. Further if $\hat{N}$ contains transversal $T$ of $C_N(Q)$ in $N$, then $ [N,\hat{Q}] = [T,\hat{Q}] \leq  \hat{N}$. Therefore by Proposition \ref{Normal_subgroups}, $S$ is normal in $G$. 
	\end{proof}
	    
	    Assume the notation of Theorem \ref{normal_subgroup_of_a_split_extension}. As it can be seen from the statement of Theorem \ref{normal_subgroup_of_a_split_extension} that the transversal of centralizer of a normal subgroup of $Q$ plays an important role in determining the normal subgroups of $G$ of the form $\hat{P} \rtimes \hat{Q}$ where $\hat{P} \leq P$ and $\hat{Q} \leq Q$.
	    
	    \medskip
		Let $N$ and $H$ be arbitrary finite groups. Let $\hat{H}$ be a normal subgroup of $H$.  The following result provides a relation between the transversal of the centralizer $C_N(\hat{H})$ in $N$ and the conjugates of $\hat{H}$ in $N \rtimes H$. In particular  we provide the complete and irredundant list of conjugates of a normal subgroup $\hat{H}$ of $H$ in $N \rtimes H$. We also provide the structure of the normalizer of  $\hat{H}$ in $N \rtimes H$. 
	
	\begin{prop}{\label{Complete_conjugates}}
		Let $N$ and $H$ be finite groups. Let $G = N \rtimes H$ and let $\hat{H}$ be a normal subgroup of $H$. Let $T$ be a transversal of $C_N(\hat{H})$ in $N$. Then
		
		\begin{enumerate}[$(\rm i)$]
			\item the set ${\cal L} = \{\hat{H}^t\mid t \in T\}$ represents the complete and irredundant list of conjugates of $\hat{H}$ in $G$.
			\item  $N_G(\hat{H}) = C_N(\hat{H}) \rtimes H$.
		\end{enumerate}
	\end{prop}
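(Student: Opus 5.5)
The plan is to reduce everything to conjugation by elements of $N$, and then to identify which elements of $N$ normalize $\hat{H}$. Write an arbitrary $g \in G$ as $g = ah$ with $a \in N$ and $h \in H$. With the paper's convention $x^g = gxg^{-1}$, we get
\[
\hat{H}^{ah} = a\,(h\hat{H}h^{-1})\,a^{-1} = \hat{H}^{a},
\]
because $\hat{H} \trianglelefteq H$. So every conjugate of $\hat{H}$ in $G$ has the form $\hat{H}^a$ with $a \in N$.

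The key lemma is $N_N(\hat{H}) = C_N(\hat{H})$. One inclusion is trivial. For the other, let $c \in N$ normalize $\hat{H}$ and let $y \in \hat{H}$. Then
\[
[c,y] = cyc^{-1}y^{-1} = (cyc^{-1})\,y^{-1} \in \hat{H}.
\]
Also $[c,y] = c\,(yc^{-1}y^{-1}) \in N$, since $N \trianglelefteq G$. As $N \cap \hat{H} \leq N \cap H = \{1\}$, we get $[c,y]=1$, so $c \in C_N(\hat{H})$. This is the same trick used in the proof of Theorem \ref{normal_subgroup_of_a_split_extension}, and it is the only place where the semidirect structure is really used.

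For (i), let $a,b \in N$. Then $\hat{H}^a = \hat{H}^b$ if and only if $b^{-1}a$ normalizes $\hat{H}$. By the lemma this holds if and only if $b^{-1}a \in C_N(\hat{H})$, that is, $a$ and $b$ lie in the same coset of $C_N(\hat{H})$. Hence $a \mapsto \hat{H}^a$ induces a bijection between the cosets of $C_N(\hat{H})$ in $N$ and the conjugates of $\hat{H}$ in $G$. Choosing one representative $t \in T$ from each coset then gives ${\cal L}$ as a complete and irredundant list.

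I expect the main obstacle to be bookkeeping: matching the side of the cosets with the conjugation convention. The computation naturally produces cosets $aC_N(\hat{H})$, while the paper speaks of right transversals. If $T$ is taken as a transversal for the cosets $C_N(\hat{H})t$, I would pass between the two via the bijection $t \mapsto t^{-1}$ between left and right cosets. Alternatively, I would note that the statement holds for whichever coset side matches the convention; the counting is unaffected.

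For (ii), $g = ah$ normalizes $\hat{H}$ if and only if $\hat{H}^a = \hat{H}$. By the lemma this holds if and only if $a \in C_N(\hat{H})$, so $N_G(\hat{H}) = C_N(\hat{H})H$. To see that this product is a semidirect product $C_N(\hat{H}) \rtimes H$, it remains to show that $C_N(\hat{H})$ is $H$-invariant. Take $h \in H$ and $c \in C_N(\hat{H})$. Then $hch^{-1} \in N$, and $hch^{-1}$ centralizes $h\hat{H}h^{-1} = \hat{H}$. The intersection $C_N(\hat{H}) \cap H$ is trivial because $N \cap H = \{1\}$, so the product is semidirect.
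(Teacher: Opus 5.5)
Your proof is correct. Part (i) is essentially the paper's proof. The paper also reduces to $\hat H^{ax}=\hat H^{a}$, writes $a=ta'$ with $a'\in C_N(\hat H)$, and gets irredundancy from $[t^{-1}t',y]\in N\cap\hat H=\{1\}$. You have just isolated that commutator step as the lemma $N_N(\hat H)=C_N(\hat H)$.

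Part (ii) is where you genuinely differ. The paper proves only the inclusion $C_N(\hat H)H\le N_G(\hat H)$ directly, using that $C_N(\hat H)=C_G(\hat H)\cap N$ is normal in $N_G(\hat H)$ and that $H\le N_G(\hat H)$. It then gets equality by counting: part (i) gives $|G:N_G(\hat H)|=|N:C_N(\hat H)|$, hence $|N_G(\hat H)|=|C_N(\hat H)||H|$. You instead get both inclusions at once from $\hat H^{ah}=\hat H^{a}$ and your lemma, and you check $H$-invariance of $C_N(\hat H)$ by hand. Your route needs neither finiteness nor part (i), and it describes the normalizer elementwise. The paper's route is shorter once (i) is available, but it needs $G$ finite and tacitly uses $C_N(\hat H)\cap H=\{1\}$ to compute $|C_N(\hat H)H|$.

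One correction to your remark on coset sides. Passing through $t\mapsto t^{-1}$ does not rescue the statement for a transversal of the cosets $C_N(\hat H)t$, because it replaces the list $\{\hat H^{t}\}$ by $\{\hat H^{t^{-1}}\}$. With the convention $x^g=gxg^{-1}$, the statement is actually false for such a transversal whenever $C=C_N(\hat H)$ is not normal in $N$. An example is $N=S_3$ with $H$ of order $2$ acting by conjugation by a transposition. To see the failure, choose $t\in N$ and $c\in C$ with $tct^{-1}\notin C$. Then $Ct\neq C(tc)$, so $t$ and $tc$ can lie in one such transversal, yet $\hat H^{tc}=\hat H^{t}$. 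Your second alternative is the right one: $T$ must represent the cosets $tC_N(\hat H)$. This is exactly what the paper's proof uses when it writes $a=ta'$ with $a'\in C_N(\hat H)$.
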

	
	\begin{proof}
		Let $g = ax$ where $a \in N$ and $x \in H$. Then $\hat{H}^g = \hat{H}^a$. Further we can write $a = ta'$ where $a' \in C_N(\hat{H})$ and $t \in T$, therefore $\hat{H}^g = \hat{H}^t \in {\cal L}$. Now if $\hat{H}^t = \hat{H}^{t'}$ for some $t,t' \in T$ then for any $x \in \hat{H}$ we have $[t^{-1}t',x] \in N\cap \hat{H} = \{1\}$. Therefore $t' = tb$ for some $b \in C_N(\hat{H})$. But $t$ and $t'$ are elements of transversal $T$ so we must have $t = t'$. 
		
		\medskip 
		Clearly $C_N(\hat{H}) = C_G(\hat{H})\cap N$ is normal in $N_G(\hat{H})$. Further since $\hat{H}$ is normal in $H$ we have $H \leq N_G(\hat{H})$. So $C_N(\hat{H})H \leq N_G(\hat{H})$. Since $|T|= |N: C_N(\hat{H})|$ is equal to the number of conjugates of $H$ in $G$ we have that $|N: C_N(\hat{H})| = |G:N_G(\hat{H})|$. Thus $|N_G(\hat{H})| = |C_N(\hat{H})| |H|$ and the result follows.  
	\end{proof}
	
	Proposition \ref{Complete_conjugates} allows us to provide the necessary condition for the normality of a certain type of subgroups of $G$. That is we have the following result.
	
     \begin{cor}
     	Assume the notation of Proposition \ref{Complete_conjugates}. Let $\hat{N}$ be a subgroup of $N$ that is normal in $G$. If $\hat{N}$ contains a transversal of $C_N(\hat{H})$ in $N$ then $S = \hat{N} \rtimes \hat{H}$ is normal in $G$. 
     \end{cor}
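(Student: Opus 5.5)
The plan is to check the conditions of Proposition \ref{Normal_subgroups} (or equivalently Corollary \ref{cor_to_normal_subgroups}) for $S = \hat{N}\rtimes\hat{H}$. First note that $S$ is a subgroup: since $\hat{N} \trianglelefteq G$, the product $\hat{N}\hat{H}$ is a subgroup, and $\hat{N}\cap\hat{H} \leq N \cap H = \{1\}$, so it is a semidirect product. By hypothesis $\hat{N} \trianglelefteq G$ (which gives $\hat{N}\trianglelefteq N$ and $[\hat{N},H]\leq \hat{N}$) and $\hat{H}\trianglelefteq H$. So the only condition of Proposition \ref{Normal_subgroups} left to verify is $[N,\hat{H}] \leq \hat{N}$.

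I would prove this last condition by conjugating $\hat{H}$ rather than by manipulating commutators, using Proposition \ref{Complete_conjugates}. Let $T \subseteq \hat{N}$ be a transversal of $C_N(\hat{H})$ in $N$. By Proposition \ref{Complete_conjugates}(i), every conjugate $\hat{H}^g$ with $g\in G$ equals $\hat{H}^t$ for some $t \in T$. Since $t\in\hat{N}\leq S$ and $\hat{H}\leq S$, we get $\hat{H}^t \leq S$. Thus every $G$-conjugate of $\hat{H}$ lies in $S$. Together with $\hat{N}\trianglelefteq G$, this shows $S^g = \hat{N}\hat{H}^g \leq S$ for all $g$, so $S\trianglelefteq G$ directly.

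If one prefers the commutator form, the same argument works elementwise. Write $a\in N$ as $a = tb$ with $t\in T$ and $b\in C_N(\hat{H})$. For $y\in\hat{H}$, conjugation by $b$ fixes $y$, so $y^a = y^{t}$ (up to the order convention $a=tb$ versus $bt$, which I would choose so that $b$ acts first). Then $[a,y] = a\,y a^{-1} y^{-1} = a\,(y a^{-1}y^{-1})$, where $a\in N$ and $ya^{-1}y^{-1}\in N$, so $[a,y]\in N$. Also $[a,y] = y^a y^{-1} = y^t y^{-1} = t\,(y t^{-1} y^{-1})$ lies in $\hat{N}$, because $t\in\hat{N}$ and $\hat{N}$ is $H$-invariant.

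The only delicate point is this bookkeeping: the paper's conventions $a^x = xax^{-1}$ and right transversals must be matched so that the centralizing factor $b$ genuinely drops out. Beyond that, the result is immediate from Proposition \ref{Normal_subgroups}.
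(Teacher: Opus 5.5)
Your proposal is correct, and your main argument is exactly the paper's: by Proposition \ref{Complete_conjugates}(i) every $G$-conjugate of $\hat{H}$ equals $\hat{H}^t$ with $t\in T\subseteq\hat{N}$, so it lies in $S$, and together with $\hat{N}\trianglelefteq G$ this gives $S^g=\hat{N}\hat{H}^g\leq S$. The left/right transversal bookkeeping you flag causes no trouble: since $\hat{N}$ is a subgroup it also contains $T^{-1}$, so the hypothesis simply says $N=\hat{N}\,C_N(\hat{H})$, and this condition does not depend on which side the cosets are taken.
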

     
     \begin{proof}
     	By proposition \ref{Complete_conjugates}, $S$ contains all conjugates of $\hat{H}$ in $G$ therefore $S$ is normal in $G$.
     \end{proof}
		
		
	\section{Normal subgroup of a finite group with cyclic Sylow subgroups}
	
	Result of H$\ddot{\text{o}}$lder, Burnside and Zassenhaus \cite[Theorem 10.1.10]{R1995} shows that every $C$-group $G$ of order $n$ is metacyclic with odd-order derived subgroup $G' \cong \mathbb{Z}_m$ and cyclic quotient $G/G'$ of order $l = n/m$; specifically, $G$ is isomorphic to 
	$$\langle a,x \mid a^m = 1 =x^l,a^x = a^r \rangle$$ 
	for some $0 \leq r \leq m$ with $r^l \equiv 1 \bmod{m}$ and $\gcd(l(r-1),m) = 1$. Conversely in a group with such a presentation all Sylow subgroups are cyclic.
	
	That is $G \cong {\mathbb{Z}_m \rtimes \mathbb{Z}_l}$ where $\gcd(m,l) = 1$ and $\mathbb{Z}_l$ acts on ${\mathbb{Z}_m}$ by $r^{th}$-powering for some $0 \leq r \leq m$ with $r^l \equiv 1 \bmod{m}$ and $\gcd(r-1,m) = 1$.
	
	\medskip
	Note that $G$ is abelian (cyclic) if and only if $m = 1$ and $r = 0$.
	
	\medskip
	In this section we provide the complete list of normal subgroups of a finite group with cyclic Sylow subgroups as described in the following result. 
	
	\begin{thm}
		Let $G = \langle a,x \mid a^m = 1 =x^l,a^x = a^r \rangle$ be a $C$-group and let $N$ be a normal subgroup of $G$. Then one of the following holds.
		
		\begin{enumerate}[$(\rm i)$]
			\item $N \leq \langle a \rangle$.
			\item $N \leq \langle x^k \rangle$ where $k = {\rm ord}(r\bmod{m})$.
			\item $N =\langle a^{m_1} \rangle \rtimes \langle x^{l_1}\rangle$ where $m_1 \mid s$ and $s \equiv (r^{l_1} - 1)\bmod{m}$.
		\end{enumerate}
		Conversely if $N$ satisfies one of $(\rm i)$, $(\rm ii)$ or $(\rm iii)$, then $N$ is normal in $G$. 
	\end{thm}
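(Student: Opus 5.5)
The plan is to specialize the Main Result (Theorem \ref{normal_subgroup_of_a_split_extension}) to $N = \langle a\rangle \cong \mathbb{Z}_m$ and $Q = \langle x\rangle \cong \mathbb{Z}_l$. Since $\gcd(m,l)=1$, its hypotheses hold, and $G = \langle a\rangle \rtimes \langle x\rangle$ with $x$ acting by $a \mapsto a^r$. The three cases of that theorem then need to be made explicit in cyclic terms. Every subgroup of $\langle a\rangle$ is characteristic, hence $Q$-invariant and normal in $G$. This gives (i) and its converse.

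For case (ii) I would compute $C_Q(N)$. An element $x^j$ centralizes $a$ exactly when $a^{r^j} = a$, i.e. $r^j \equiv 1 \pmod m$. This holds exactly when $k = {\rm ord}(r \bmod m)$ divides $j$, so $C_Q(N) = \langle x^k\rangle$. Any subgroup of $\langle x^k\rangle$ is central in $G$, because it commutes with $a$ and with $x$. This gives (ii) and its converse.

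For case (iii), write $\hat N = \langle a^{m_1}\rangle$ with $m_1 \mid m$ and $\hat Q = \langle x^{l_1}\rangle$ with $l_1 \mid l$. Both are automatically normal in $G$ and in $Q$ respectively, since $Q$ is abelian and $\hat N$ is characteristic in $N$. So by Theorem \ref{normal_subgroup_of_a_split_extension}, equivalently by Corollary \ref{cor_to_normal_subgroups} together with Proposition \ref{Normal_subgroups}, the only remaining condition is that $\hat N$ contains a transversal of $C_N(\hat Q)$ in $N$. Equivalently, $[N,\hat Q] \leq \hat N$, which is the condition actually used in the converse of the Main Result. I would work with the commutator form because it is easier to compute.

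The main step is computing $[N,\hat Q]$. With the paper's conventions, $[a, x^{l_1}]$ equals $a^{\pm(r^{l_1}-1)}$ up to inversion, depending on orientation. The group $N$ is abelian and $\hat Q$ is cyclic. Hence $[N,\hat Q]$ is generated by the commutators $[a^i, x^{l_1 j}]$, which are powers of $a^{r^{l_1 j}-1}$. Each exponent $r^{l_1 j}-1$ is a multiple of $r^{l_1}-1$. Therefore
\[ [N,\hat Q] = \langle a^{s}\rangle, \qquad s \equiv r^{l_1}-1 \pmod m. \]
Consequently $[N,\hat Q] \leq \langle a^{m_1}\rangle$ iff $a^s \in \langle a^{m_1}\rangle$ iff $m_1 \mid s$, reading divisibility modulo $m$ (that is, $m_1 \mid \gcd(s,m)$, which is well defined since $m_1 \mid m$). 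This yields (iii) and its converse.

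The obstacle I expect is bookkeeping rather than any real difficulty. One point is checking that the presentation's conjugation convention gives exponent $r^{l_1}-1$ and not its inverse; the sign is irrelevant because it yields the same cyclic subgroup. The other is interpreting ``$m_1 \mid s$'' correctly when $s$ is only defined modulo $m$.
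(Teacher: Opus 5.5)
Your proposal is correct and takes essentially the same route as the paper. You specialize the Main Result, identify $C_Q(\langle a\rangle)=\langle x^k\rangle$, and reduce case (iii) via Proposition \ref{Normal_subgroups} to $[\langle a\rangle,\langle x^{l_1}\rangle]=\langle a^{s}\rangle\le\langle a^{m_1}\rangle$, which is equivalent to $m_1\mid s$; you also check explicitly that this commutator subgroup is exactly $\langle a^s\rangle$ and read $m_1\mid s$ modulo $m$ with $m_1\mid m$, details the paper leaves implicit.
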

	
	\begin{proof}Let us assume that $P = \langle a \rangle$ and $Q = \langle x \rangle$. By Theorem \ref{normal_subgroup_of_a_split_extension}, if $|N|\mid m$, then $N \leq \langle a \rangle$. 
		
		\medskip
		If $|N| \mid l$ then $N \leq C_P(Q)$. Now clearly $\langle x^k \rangle \leq C_Q(P)$. Let $x^u \in C_Q(P)$. Then $x^uax^{-u} = a$ which gives $r^u \equiv 1\bmod{m}$. Thus $k \mid u$ and $x^u \in \langle x^k \rangle$. Whence $C_Q(P) = \langle x^k \rangle$.
		
		\medskip
		Finally if $|N| = m'l'$ where $m'\mid m$ and $l'\mid l$ then $N =\langle a^{m_1} \rangle \rtimes \langle x^{l_1}\rangle$ where $\langle a^{m_1} \rangle$ contains transversal $T$ of $C_P(\langle x^{l_1}\rangle)$ in $P$. So $\langle a^{m_1} \rangle$ contains $[P,\langle x^{l_1}\rangle]$. In particular $\langle a^{m_1} \rangle$ contains $[a^{-1},x^{l_1}] = a^{r^{l_1} -1} = a^{s}$ where $s \equiv (r^{l_1} - 1) \bmod{m}$ and it follows that $m_1 \mid s$. 
		
		Conversely it can be seen that the subgroups listed in part $(\rm i)$, $(\rm ii)$ are normal in $G$. Let $N =\langle a^{m_1} \rangle \rtimes \langle x^{l_1}\rangle$ where $m_1 \mid s$ and $s \equiv (r^{l_1} - 1)\bmod{m}$. Since $[\langle a \rangle, \langle x^{l_1}\rangle] = \langle a^s \rangle \leq N$ by Lemma \ref{Normal_subgroups}, $N$ is normal in $G$.
	\end{proof}

	\section{Normal subgroups of a group in the variety $\mathfrak{A}_p\mathfrak{A}_q$}
	
	Let $G \in \mathfrak{A}_p\mathfrak{A}_q$. Then clearly  $G = P \rtimes Q$ where $P$ is elementary abelian $p$-group and $Q$ is elementary abelian $q$-group. In this section we provide a method to determine normal subgroups of $G$.
	
	\medskip
	Throughout this section we assume that $P$ is an elementary abelian group of order $p^{\alpha}$ and $Q$ is an elementary abelian group of order $q^{\beta}$. 
	
	\medskip
	\noindent
	 By Theorem \ref{normal_subgroup_of_a_split_extension} $(\rm iii)$, the normal subgroups of a split extension of  $P$ by $Q$ depend on the centralizer of the subgroups of $Q$ in $P$. Hence our next result  is concerned with determining the centralizer of a subgroup of $Q$ in $P$.

	\begin{prop}{\label{centralizers_of_a_subgroup_of_Q}}
		Let $G = P \rtimes Q$. Then
		
		\begin{enumerate}[$(\rm i)$]
			\item $P = M_0 \times  M_1 \times \cdots \times M_k$ where $Q$ acts on $M_0$ trivially and $M_i$ for $ 1 \leq i \leq k$ are irreducible ${\mathbb{Z}}_pQ$-modules.
	        \item  $dim(M_1) = dim(M_i) = d$ where $d = {\rm ord}(p\bmod{q})$ for all $1 \leq i \leq k$.
	        \item Let ${\cal B}_i$ be the basis of $M_i$ and let ${\cal B} = \{a_1,\ldots,a_{\alpha}\}$ be the set $ \cup_{i=0}^{k}{\cal B}_i$. Let $x \in Q$. Then $C_{P}(x) = span_{i \in {\cal I}}\{a_i\}$ where ${\cal I}$ is the set of indices $i$ such that ${a_i}^{x} = a_i$. Further if $\hat{Q} \leq Q$ then  there exists a subset $\{i_1,\ldots,i_r\}$ of $\{1,\ldots,k$\} such that $C_P(\hat{Q}) = M_0 \times  M_{i_1} \times \cdots \times M_{i_r}$. 
	        \item If $\hat{Q} \leq Q$ then  there exists a subset $\{i_1,\ldots,i_r\}$ of $\{1,\ldots,k$\} such that $N_G(\hat{Q}) = (M_0 \times  M_{i_1} \times \cdots \times M_{i_r}) \rtimes Q$. 
	        
		\end{enumerate}
	\end{prop}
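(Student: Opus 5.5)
The plan is to view $P$ as a $\mathbb{Z}_pQ$-module (written additively) via the conjugation action, and to use two facts throughout. First, Maschke's theorem applies because $\gcd(p,q)=1$. Second, since $Q$ is abelian, for any $y\in Q$ the fixed space $C_P(y)$ and the subspace $[P,y]$ are $Q$-submodules.

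For (i), I will set $M_0=C_P(Q)$, which is a submodule. By Maschke's theorem $P=M_0\oplus W$ for some $Q$-invariant complement $W$. Then $C_W(Q)=W\cap M_0=0$. Applying Maschke again, $W$ decomposes as a direct sum $M_1\oplus\cdots\oplus M_k$ of irreducible submodules. Each $M_i$ with $i\geq 1$ is a nontrivial irreducible module, because a trivial one would lie in $C_W(Q)=0$.

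For (ii), fix $i\geq 1$ and let $K_i$ be the kernel of the action of $Q$ on $M_i$. Then $Q/K_i$ acts faithfully and irreducibly on $M_i$. By Schur's lemma $E=\mathrm{End}_{\mathbb{Z}_pQ}(M_i)$ is a finite field, and the image of $Q/K_i$ is a finite subgroup of $E^{\times}$, hence cyclic. Since $Q/K_i$ is elementary abelian, cyclic and nontrivial, it has order $q$; let $y$ be a generator. The $\mathbb{Z}_p$-algebra generated by $y$ is $\mathbb{Z}_p[\zeta]\cong\mathbb{F}_{p^d}$, where $\zeta$ is a primitive $q$-th root of unity and $d={\rm ord}(p\bmod q)$. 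The module $M_i$ is irreducible over this field, since $Q$ acts through $\langle y\rangle$. Hence $M_i$ is one-dimensional over $\mathbb{F}_{p^d}$, so $\dim_{\mathbb{Z}_p}M_i=d$. This identification of the irreducible modules is the step I expect to need the most care; everything else is bookkeeping.

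For (iii), the key observation is that for $x\in Q$ and $i\geq1$, either $x\in K_i$, so that $x$ fixes $M_i$ pointwise, or $C_{M_i}(x)=0$. The latter holds because $C_{M_i}(x)$ is a $Q$-submodule (as $Q$ is abelian) and it is proper, so it is zero by irreducibility. Since $P$ is the direct sum of the $M_i$ and each $M_i$ is $x$-invariant, we get $C_P(x)=\bigoplus_i C_{M_i}(x)$. This equals $M_0$ plus the sum of those $M_i$ with $x\in K_i$. In terms of the basis ${\cal B}$, the basis vectors of $M_i$ are either all fixed by $x$ or none is, which gives $C_P(x)=\mathrm{span}\{a_i: i\in{\cal I}\}$. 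For $\hat{Q}\leq Q$ I will intersect over the elements $x\in\hat{Q}$. This gives $C_P(\hat{Q})=M_0\times\prod M_{i_j}$, where $\{i_1,\ldots,i_r\}$ is the set of indices $i$ with $\hat{Q}\leq K_i$.

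Finally, (iv) follows directly. Since $Q$ is abelian, every $\hat{Q}\leq Q$ is normal in $Q$. Proposition \ref{Complete_conjugates}(ii) with $N=P$ and $H=Q$ therefore gives $N_G(\hat{Q})=C_P(\hat{Q})\rtimes Q$. Substituting the description of $C_P(\hat{Q})$ from (iii) gives the stated form.
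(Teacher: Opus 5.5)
Your proof is correct and follows the same skeleton as the paper's: Maschke's theorem for (i), the dimension of a nontrivial irreducible module for (ii), irreducibility forcing $C_{M_i}(x)$ to be $0$ or $M_i$ for (iii), and Proposition~\ref{Complete_conjugates}(ii) for (iv). The differences are minor. You prove (ii) directly via Schur's lemma instead of citing [S1992, Theorem 2.3.2], and in (iii) you get $Q$-invariance of $C_{M_i}(x)$ from the commutativity of $Q$ rather than restricting $M_i$ to $\langle x\rangle$. Your explicit choice $M_0=C_P(Q)$ also makes precise the implicit requirement in (ii) that $M_1,\ldots,M_k$ be nontrivial.
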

	
	\begin{proof}
		Clearly $(\rm i)$ follows from Maschke's Theorem and $(\rm ii)$ from \cite[Theorem 2.3.2]{S1992}.
		
		\medskip 
		Now clearly $span_{i \in {\cal I}}\{a_i\}$ is contained in $C_P(x)$. Let  $b \in C_P(x)$. As each $M_i$ is a ${\mathbb{Z}_p\langle x \rangle}$-module therefore by \cite[Theorem 2.3.2]{S1992}, for each $i$ either $M_i$ is non-trivial irreducible ${\mathbb{Z}_p\langle x \rangle}$-module or $x$ acts trivially on $M_i$. Without loss of generality we can assume that $x$ acts trivially on $M_0,\ldots, M_l$ where $0 \leq l \leq k$. Then $M_0 \times \cdots \times M_l \leq span_{i \in {\cal I}}\{a_i\}$. We can write $b = b_0 \cdots b_lb_{l+1}\cdots b_k$ where $b_i \in M_i$. Then $x$ acts trivially on $b_i$ for each $0 \leq i \leq k$. So if $b_i \not = 1$ for $l+1 \leq i \leq k$, then $\langle b \rangle$ becomes a ${\mathbb{Z}_p\langle x \rangle}$-submodule of $M_i$ a contradiction. Thus $b =  b_0 \cdots b_l \in span_{i \in {\cal I}}\{a_i\}$. It also shows that $C_P(x) = M_0 \times M_1 \times \cdots \times M_l$. In Particular if $\hat{Q} \leq Q$, then there exists a subset $\{i_1,\ldots,i_r\}$ of $\{1,\ldots,k\}$ such that  $C_P(\hat{Q}) = M_0 \times M_{i_1}\times \cdots \times M_{i_r}$.
		
		\medskip
		Clearly $(\rm iv)$ follows from $(\rm iii)$ and  Proposition \ref{Complete_conjugates}.
	 \end{proof}
	 
	 So the Proposition \ref{centralizers_of_a_subgroup_of_Q} provides us a method of determining the centralizers of the subgroups of $Q$ in $P$. We now present a remark on determining the centralizers of elements of $Q$ in $P$ using their matrix representations.
	 
	 \begin{rem}
	 	Let $G = P \rtimes Q$. Then the elements of $Q$ can be realized as matrices in ${\rm GL}(\alpha,p)$ and the elements of $P$ can be realized as vectors on which $Q$ via multiplication of the matrices in ${\rm GL}(\alpha,p)$ with the vectors in $P$	. By Maschke's Theorem we have $P = M_0 \times M_1 \times \cdots \times M_k$ where $M_0$ is a trivial $\mathbb{Z}_pQ$-module and $M_i$ for $1 \leq i \leq k$ are irreducible $\mathbb{Z}_pQ$-modules. Let ${\cal B}_i$ be the basis of $M_i$. Then $\cup_{i=0}^{k}{\cal B}_i$ forms basis of $P$. Let $[a_{ij}]$ be the matrix of $x \in Q$ with respect to the basis ${\cal B}$. Let ${\cal B} = \{b_1,\ldots,b_{\alpha}\}$. Then ${b_i}^x = b_i$ if and only if the $a_{ii} = 1$. Thus the set ${\cal I} = \{i \mid {b_i}^x = b_i\}$ is equal to $\{i \mid a_{ii} = 1\}$. This standard observation allows us to detemine the centralizers of the elements of $Q$ simply by looking at the matrices of $Q$ with respect to the basis ${\cal B}$. Further let $\hat{Q} \leq Q$ and let $\hat{Q} = \langle x_1 \rangle \times \cdots \times \langle x_r \rangle$. Let ${\cal I}_j = \{i \mid {b_i}^{x_j} = b_i\}$ for $1 \leq j \leq r$. Then $C_P(\hat{Q}) = span_{i \in {\cal I}}\{b_i\}$ where ${\cal I} = \cap_{i=1}^{r}{\cal I}_i$. 
	 \end{rem}

	Before moving further we recall some standard definitions of module theory. 
	
	\begin{definition}
		An $R$ module $M$ is called isotypic if $M$ is isomorphic to a direct sum of isomorphic $R$-modules. Let $M$ be isotypic. Then we say $M$ is of type $V$ if each summand in the direct sum of $M$ is isomorphic to $V$. An $R$-submodule $N$ of $M$ isomorphic to $V$  is called a component of $M$.
	\end{definition}

 Now we provide the description of the normal subgroups of $G = P \rtimes Q$.
	
	\begin{thm}{\label{normal_subgroups_in_the_variety}}
		Let $P_1,\ldots ,P_r$ be non-isomorphic irreducible $\mathbb{Z}_pQ$-modules  and let $P = U_1 \times \cdots \times U_r$ where $U_i$'s are isotypic  ${\mathbb Z}_pQ$-submodules  of type $P_i$ of multiplicity $m_i$ for $1 \leq i \leq r$. Let $G = P \rtimes_{\theta} Q$ and let $S$ be a normal subgroup of $G$. Then one of the following holds.

		\begin{enumerate}[$(\rm i)$]
			\item $S \leq P$ is an $\mathbb{Z}_pQ$-submodule of $P$. Further there exists a subset $\{i_1,\ldots,i_s\}$ of $\{1,\ldots,r\}$ such that $S = M_{i_1} \times \cdots \times M_{i_s}$ where $M_{i_j} \leq {U_{i_j}}$ is a direct sum of isomorphic ${\mathbb{Z}_p}Q$-submodules of type $P_{i_j}$. 
	        \item $S \leq Ker(\theta)$.
	        \item $S = \hat{P} \rtimes \hat{Q}$ where $\hat{P} \leq P$ is a $\mathbb{Z}_pQ$-submodule of $P$ with $\hat{P}$ contains a complement of $C_P(\hat{Q})$ in $P$ and $\hat{Q}$ is a subgroup of $Q$. Further there exists a subset $\{l_1,\ldots,l_m\}$ of $\{1,\ldots,r\}$ such that $\hat{P} = M_{l_1} \times \cdots \times M_{l_m}$ where $M_{l_k} \leq {U_{l_k}}$ is a direct sum of isomorphic ${\mathbb{Z}_p}Q$-submodules of type $P_{l_k}$. 
	    \end{enumerate}
	    Conversely, the subgroups of $G$ described in $(\rm i)$, $(\rm ii)$ and $(\rm iii)$ are normal in $G$.
	\end{thm}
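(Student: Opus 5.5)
The plan is to specialize Theorem \ref{normal_subgroup_of_a_split_extension} to $N = P$, $Q$ elementary abelian, and then add the module-theoretic description of the $\mathbb{Z}_pQ$-submodules of $P$. Since $|P| = p^{\alpha}$ and $|Q| = q^{\beta}$ with $p \neq q$, the coprimality hypothesis holds. So every normal subgroup $S$ of $G$ falls into one of three cases: a $Q$-invariant normal subgroup of $P$; a subgroup of $C_Q(P)$; or $\hat{P} \rtimes \hat{Q}$ with $\hat{P} \trianglelefteq G$ containing a transversal of $C_P(\hat{Q})$ in $P$ and $\hat{Q} \trianglelefteq Q$.

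First I translate each condition into module language. Because $P$ is abelian, a subgroup of $P$ is normal in $G$ exactly when it is $Q$-invariant, that is, when it is a $\mathbb{Z}_pQ$-submodule. Here $C_Q(P) = Ker(\theta)$, which gives (ii). Every subgroup of the abelian group $Q$ is normal, so the condition $\hat{Q} \trianglelefteq Q$ is automatic. For the transversal condition in (iii), Maschke's Theorem shows that the submodule $C_P(\hat{Q})$ has a $Q$-invariant complement $W$; this is also visible from Proposition \ref{centralizers_of_a_subgroup_of_Q}(iii), where $C_P(\hat{Q}) = M_0 \times M_{i_1}\times\cdots$ and the remaining $M_j$ form a complement. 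Then $\hat{P}$ contains a transversal of $C_P(\hat{Q})$ in $P$ if and only if $\hat{P} + C_P(\hat{Q}) = P$.

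I must check that this is equivalent to $\hat{P}$ containing a complement of $C_P(\hat{Q})$. If $\hat{P} + C = P$ with $C = C_P(\hat{Q})$, then $\hat{P}/(\hat{P}\cap C) \cong P/C$. By Maschke's Theorem applied inside $\hat{P}$, the submodule $\hat{P}\cap C$ has a $Q$-invariant complement in $\hat{P}$, and that complement is a complement of $C$ in $P$. The converse is immediate. The same equivalence also gives $[P,\hat{Q}] = [W,\hat{Q}] \subseteq \hat{P}$, so the converse direction follows from Proposition \ref{Normal_subgroups}, or directly from the converse in Theorem \ref{normal_subgroup_of_a_split_extension}.

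The remaining, and I expect main, step is the structural claim that every $\mathbb{Z}_pQ$-submodule $M$ of $P = U_1\times\cdots\times U_r$ decomposes as $M = (M\cap U_1)\times\cdots\times(M\cap U_r)$, with each $M\cap U_i$ a direct sum of copies of $P_i$. I would argue this via isotypic components. By Maschke's Theorem, $M$ is a direct sum of irreducible submodules. Each irreducible submodule $V\cong P_i$ lies in $U_i$: composing the inclusion with the projection $P\to U_j$ gives a homomorphism $V \to U_j$, and by Schur's lemma it is zero for $j\neq i$, since $U_j$ is a sum of copies of $P_j \not\cong P_i$. Grouping the summands of $M$ by isomorphism type gives the decomposition, with $M_{i_j} = M\cap U_{i_j}$ nonzero only for the indices in the chosen subset. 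One point of care: the trivial module may occur among the $P_i$ (that is, $M_0$), and I would treat it as one of the isotypic types. Applying this to $S$ in case (i) and to $\hat{P}$ in case (iii) completes the proof. The converse for (i) and (ii) is immediate, since submodules are $Q$-invariant and kernel elements are central.
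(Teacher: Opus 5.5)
Your proof is correct and follows essentially the same route as the paper. You specialize Theorem \ref{normal_subgroup_of_a_split_extension}, identify $C_Q(P)$ with $\ker\theta$, rewrite the transversal condition as $\hat{P}\,C_P(\hat{Q}) = P$ and split off a complement inside $\hat{P}$, and place each irreducible summand in its isotypic component; the differences are minor: you prove that placement directly with Schur's lemma on the projections $P \to U_j$ where the paper cites Robinson's results on homogeneous components, and you use Maschke to get a $Q$-invariant complement where the paper settles for a vector-space complement.
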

	
	\begin{proof}
		In order to prove this result we use Theorem \ref{normal_subgroup_of_a_split_extension}. If $S \leq P$, then is a $\mathbb{Z}_pQ$-submodule. Thus $S = M_{i_i}\times \cdots \times M_{i_k}$ where $M_{i_j}$ is isotypic of type $P_{i_j}$. In order to show that $M_{i_j} \leq U_{i_j}$ it is enough to show that $U_{i_j}$ is the generated by all irreducible $\mathbb{Z}_pQ$-submodules isomorphic to $P_{i_j}$. Let $U = U_{i_j}$ and let $M$ be the $\mathbb{Z}_pQ$-submodule of $P$ generated by all irreducible $\mathbb{Z}_pQ$-submodules of $P$ of type $P_{i_j}$. Then by Theorem \cite[3.3.11]{R1995}, $M$ is the direct sum of irreducible modules isomorphic to $P_{i_j}$. Further by Theorem \cite[3.3.12]{R1995} $P = M \times (P_{k_1} \times \cdots \times P_{k_u})$. But then by Lemma \cite[3.3.10]{R1995} we must have $U = M$ and the result follows.

		
		\medskip
		If $S \leq Q$ then $S \leq C_Q(P) = ker(\theta)$.
		
		\medskip
	 Finally if  $S = \hat{P} \rtimes \hat{Q}$ where $\hat{P} \leq P$, $\hat{Q} \leq Q$ and $\hat{P}$ contains a transversal of $C_P(\hat{Q})$ in $P$.  Then we can write $P = C_P(\hat{Q})\hat{P}$. Since $\hat{P}$ is elementary abelian we have $\hat{P} = C_P(\hat{Q})\cap \hat{P} \times K$ where $K \leq \hat{P}$. Thus $P = C_P(\hat{Q}) \times  K$ where $K \leq \hat{P}$ and therefore $\hat{P}$ contains a complement of $C_P(\hat{Q})$ in $P$. Since $\hat{P}$ is a $\mathbb{Z}_pQ$-module it can be shown that $\hat{P} = M_{l_1}\times \cdots \times M_{l_m}$ as in part $(\rm i)$. 
	 
	 \medskip
	 Conversely the subgroups described in $(\rm i)$, $(\rm ii)$ and $(\rm iii)$ are clearly normal in $G$.
	\end{proof}
	
	In particular if $m_i = 1$ for $1\leq i \leq r$ in the Theorem \ref{normal_subgroups_in_the_variety} then we have the following corollary.
	
	\begin{cor}
		Let $G = P \rtimes_{\theta} Q$ and  $P = {P_1} \times \cdots \times {P_r}$ where $P_i$'s are irreducible non-isomorphic ${\mathbb Z}_pQ$-module for $1 \leq i \leq r$. Let $S$ be a normal subgroup of $G$. Then one of the following holds.

		\begin{enumerate}[$(\rm i)$]
			\item $S \leq P$ is an $\mathbb{Z}_pQ$-submodule of $P$. Further there exists a subset $\{i_1,\ldots,i_s\}$ of $\{1,\ldots,r\}$ such that $S = P_{i_1} \times \cdots \times P_{i_s}$.
			\item $S \leq Ker(\theta)$.
			\item $S = \hat{P} \rtimes \hat{Q}$ where $\hat{P} \leq P$ is a $\mathbb{Z}_pQ$-submodule of $P$ with $\hat{P}$ contains a complement of $C_P(\hat{Q})$ in $P$ and $\hat{Q}$ is a subgroup of $Q$. Further there exists a subset $\{l_1,\ldots,l_m\}$ of $\{1,\ldots,r\}$ such that $\hat{P} = P_{l_1} \times \cdots \times P_{l_m}$. 
		\end{enumerate}
	\end{cor}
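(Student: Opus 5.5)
This corollary should follow from Theorem \ref{normal_subgroups_in_the_variety} by specialising to $m_i = 1$ for all $i$. In that case each isotypic submodule $U_i$ equals the irreducible module $P_i$, so $P = P_1 \times \cdots \times P_r$ with pairwise non-isomorphic irreducible summands. I will take the three alternatives of Theorem \ref{normal_subgroups_in_the_variety} for a normal subgroup $S$ of $G$ and check that each reduces to the corresponding alternative here.

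Case $(\rm ii)$, $S \leq Ker(\theta)$, carries over verbatim. In cases $(\rm i)$ and $(\rm iii)$ the theorem gives a $\mathbb{Z}_pQ$-submodule ($S$, respectively $\hat{P}$) of the form $M_{j_1}\times\cdots\times M_{j_t}$. Each $M_{j}$ is a submodule of $U_{j} = P_{j}$ and is a direct sum of copies of $P_{j}$. Since $P_j$ is irreducible, $M_j$ is either trivial or all of $P_j$. Discarding the trivial factors from the index set, the submodule is $P_{j_1}\times\cdots\times P_{j_t}$, which is the form claimed in $(\rm i)$ and $(\rm iii)$. The remaining conditions in $(\rm iii)$, namely that $\hat{P}$ contains a complement of $C_P(\hat{Q})$ and that $\hat{Q}\leq Q$, are inherited unchanged.

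The only step needing real justification is the submodule structure: any $\mathbb{Z}_pQ$-submodule of $P$ must equal a product of some of the $P_i$. If one prefers not to rely on the precise form of the theorem's conclusion, this can be argued directly. By Maschke's Theorem a submodule $M$ is semisimple, so it is a sum of irreducible submodules. Any irreducible submodule $V \leq P$ projects non-trivially onto some $P_i$, and by Schur's lemma that projection is an isomorphism $V\cong P_i$. Because the $P_i$ are pairwise non-isomorphic, the projections of $V$ to the other $P_j$ are zero, so $V = P_i$. Hence $M$ is a product of some of the $P_i$. I expect this to be the main point to state carefully, but it is standard. The converse direction, that the listed subgroups are normal, is not part of the corollary as stated; if wanted, it follows directly from the converse of Theorem \ref{normal_subgroups_in_the_variety}.
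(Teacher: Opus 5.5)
Your proposal is correct and follows the paper's route: the paper obtains this corollary simply by setting all $m_i=1$ in Theorem~\ref{normal_subgroups_in_the_variety}, so that $U_i=P_i$ and each $M_j$ is either trivial or $P_j$, exactly as you do. Your additional Maschke--Schur argument that every $\mathbb{Z}_pQ$-submodule of a multiplicity-free $P$ is a product of some of the $P_i$ is valid. It gives a self-contained check of the one structural point that the paper handles inside the theorem's proof by citing Robinson.
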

	
	Another particular case of Theorem \ref{normal_subgroups_in_the_variety} is when $q \mid p-1$. In this case the irreducible $\mathbb{Z}_pQ$-modules have degree $1$ and normal subgroups of $G = P \rtimes Q$ have the following description. 
	
	\begin{cor}
			Let $q \mid p-1$. Let $P_1,\ldots ,P_r$ be non-isomorphic irreducible $\mathbb{Z}_pQ$-modules  and let $P = U_1 \times \cdots \times U_r$ where $U_i$'s are isotypic  ${\mathbb Z}_pQ$-submodules  of type $P_i$ and multiplicity $m_i$ for $1 \leq i \leq r$. Let $G = P \rtimes_{\theta} Q$ and let $S$ be a normal subgroup of $G$. Then one of the following holds.

		\begin{enumerate}[$(\rm i)$]
			\item $S \leq P$ is an $\mathbb{Z}_pQ$-submodule of $P$. Further there exists a subset $\{i_1,\ldots,i_s\}$ of $\{1,\ldots,r\}$ such that $S = M_{i_1} \times \cdots \times M_{i_s}$ where $M_{i_j}$ is a subspace of $U_{i_j}$. 
			\item $S \leq Ker(\theta)$.
			\item $S = \hat{P} \rtimes \hat{Q}$ where $\hat{P} \leq P$ is a $\mathbb{Z}_pQ$-submodule of $P$ with $\hat{P}$ contains a complement of $C_P(\hat{Q})$ in $P$ and $\hat{Q}$ is a subgroup of $Q$. Further there exists a subset $\{l_1,\ldots,l_m\}$ of $\{1,\ldots,r\}$ such that $\hat{P} = W_{l_1} \times \cdots \times W_{l_m}$ where $W_{l_k}$ is a subspace of $ {U_{l_k}}$. 
		\end{enumerate}
		Conversely the subgroups described in $(\rm i)$, $(\rm ii)$ and $(\rm iii)$ are normal in $G$.
	\end{cor}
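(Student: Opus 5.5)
This corollary is the special case $q \mid p-1$ of Theorem \ref{normal_subgroups_in_the_variety}, so the plan is to apply that theorem verbatim. The only new input is that every irreducible $\mathbb{Z}_pQ$-module is one-dimensional when $q \mid p-1$. With that in hand, the phrase ``direct sum of isomorphic $\mathbb{Z}_pQ$-submodules of type $P_{i_j}$ inside $U_{i_j}$'' can be replaced by ``subspace of $U_{i_j}$''.

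\textbf{Step 1: irreducible modules are one-dimensional.} By Proposition \ref{centralizers_of_a_subgroup_of_Q}(ii), every nontrivial irreducible $\mathbb{Z}_pQ$-module has dimension $d = {\rm ord}(p \bmod q)$. If $q \mid p-1$ then $p \equiv 1 \bmod q$, so $d=1$. The trivial module is also one-dimensional.

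A more direct argument is also available. $Q$ is abelian of exponent $q$, and $\mathbb{Z}_p$ contains the $q$-th roots of unity. Hence the commuting operators of $Q$ on $P$ are simultaneously diagonalizable. It follows that each $P_i$ is a one-dimensional space $\mathbb{Z}_p v_i$ on which $Q$ acts by a character $\chi_i\colon Q\to \mathbb{Z}_p^\times$, and distinct $i$ correspond to distinct characters.

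\textbf{Step 2: submodules of $U_i$ are exactly its subspaces.} The isotypic component $U_i$ is the $\chi_i$-eigenspace, so $Q$ acts on all of $U_i$ by the scalar $\chi_i$. Therefore every subspace of $U_i$ is $Q$-invariant. Such a subspace is a $\mathbb{Z}_pQ$-submodule, and it is a direct sum of one-dimensional copies of $P_i$. Conversely, any submodule of $U_i$ is in particular a subspace.

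\textbf{Step 3: rewrite the two branches.} In case (i) of Theorem \ref{normal_subgroups_in_the_variety} we get $S=M_{i_1}\times\cdots\times M_{i_s}$ with $M_{i_j}\le U_{i_j}$ a submodule, which by Step 2 is just a subspace. Case (iii) transfers in the same way with $W_{l_k}:=M_{l_k}$. Case (ii) is unchanged.

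For the converse, Step 2 shows that any product of subspaces $M_{i_j}\le U_{i_j}$ is a $\mathbb{Z}_pQ$-submodule of $P$, hence $Q$-invariant and normal in the abelian group $P$. So the converse part of Theorem \ref{normal_subgroups_in_the_variety} applies.

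\textbf{Main obstacle.} The point needing the most care is the decomposition claim in Step 3. A $\mathbb{Z}_pQ$-submodule $S$ of $P$ must split as $\bigoplus (S\cap U_i)$. This follows from the eigenspace decomposition, since projection onto each eigenspace is a polynomial in the elements of $Q$. Beyond this, the corollary is essentially a restatement of Theorem \ref{normal_subgroups_in_the_variety}.
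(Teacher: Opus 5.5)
Your proposal is correct and takes the same route as the paper. The paper also deduces the corollary directly from Theorem \ref{normal_subgroups_in_the_variety}, using only the observation that $Q$ acts on each $U_i$ by scalar matrices, so every subspace of $U_i$ is a $\mathbb{Z}_pQ$-submodule; your Steps 1--3 (one-dimensionality of the irreducibles, $U_i$ as the $\chi_i$-eigenspace, and the splitting $S=\bigoplus_i (S\cap U_i)$) just make explicit details the paper leaves implicit.
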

	
	\begin{proof}
		By virtue of Theorem \ref{normal_subgroups_in_the_variety} the result follows plainly because the image of $Q$ in ${\rm GL}(m_i,p)$ consists of scalar matrices only and therefore every subspace of $U_i$ is a ${\mathbb{Z}_p}Q$-module.
	\end{proof}
	
   From Theorem \ref{normal_subgroups_in_the_variety} it follows that the normal subgroups of a group $G = P \rtimes Q$ are determined by the ${\mathbb{Z}}_pQ$-submodules of an isotypic ${\mathbb{Z}}_pQ$-module. Our next objective is to describe the submodules of an isotopic ${\mathbb{Z}}_pQ$-module.  
    
    \begin{prop}
    	Let $M$ be an irreducible ${\mathbb{Z}}_pQ$-module and let $U = U_1 \times \cdots \times U_k$ be an isotypic ${\mathbb{Z}}_pQ$-module of type $M$. Let  $d = {\rm ord}(p\bmod{q})$ and let $\mathbb{F}$ be the field of order $p^d$. Then 
    	\begin{enumerate}[$(\rm i)$]
    		
    		\item if $V$ is an irreducible submodule of $U$, then for $\{1\} \not = v \in V$ there exists $\gamma \in {\rm GL}(U)$ such that the set $\{v,\gamma^p(v),\ldots,\gamma^{p^{d-1}}(v)\}$
    		\item there exists a subset $\{i_1,\ldots,i_r\}$ such that V is the subdirect product of $\{U_{i_1}, \ldots,U_{i_r}\}$.
    		\item  there is a one to one correspondence between the irreducible $\mathbb{Z}_pQ$-submodules of $U$ and one dimensional $\mathbb{F}$-subspaces of $\mathbb{F}^k$.
    	\end{enumerate}
    \end{prop}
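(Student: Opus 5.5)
The plan is to make the structure of $U$ explicit through Schur's lemma and then read off all three parts from it. Since $q \neq p$, any $\mathbb{Z}_pQ$-module isomorphic to $M$ is semisimple. If $M$ is trivial then $d$ plays no role, and all three parts reduce to linear algebra over $\mathbb{Z}_p$. So I would assume $M$ is a non-trivial irreducible module.

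\textbf{Setup.} $E={\rm End}_{\mathbb{Z}_pQ}(M)$ is a finite division ring, hence a field by Wedderburn. The image of the abelian group $Q$ in ${\rm GL}(M)$ commutes with itself and generates a commutative subalgebra $\mathbb{Z}_p[\bar Q]$ of ${\rm End}(M)$. Irreducibility and commutativity force $\mathbb{Z}_p[\bar Q]$ to be a field acting transitively on $M\setminus\{0\}$, so $\dim_{\mathbb{Z}_p}M=[\mathbb{Z}_p[\bar Q]:\mathbb{Z}_p]$. The image $\bar Q$ is a finite subgroup of this field's multiplicative group, hence cyclic. As $Q$ is elementary abelian, $\bar Q=\langle\zeta\rangle$ with $\zeta$ of order $q$. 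Therefore $\mathbb{Z}_p[\bar Q]=\mathbb{Z}_p(\zeta)\cong\mathbb{F}$, of degree $d={\rm ord}(p \bmod q)$, consistent with Proposition \ref{centralizers_of_a_subgroup_of_Q}(ii). So $M\cong\mathbb{F}$, with a chosen $x_0\in Q$ acting by multiplication by $\zeta$ and each $x\in Q$ acting by some power of $\zeta$. Fixing isomorphisms $U_i\cong\mathbb{F}$ gives $U\cong\mathbb{F}^k$, on which $Q$ acts by scalars in $\langle\zeta\rangle$. Let $\gamma\in{\rm GL}(U)$ be the action of $x_0$, i.e.\ scalar multiplication by $\zeta$.

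\textbf{Part (iii).} The $\mathbb{Z}_pQ$-submodules of $U$ are exactly the $\mathbb{Z}_p$-subspaces stable under multiplication by $\zeta$. Since $\mathbb{F}=\mathbb{Z}_p[\zeta]$, these are exactly the $\mathbb{F}$-subspaces of $\mathbb{F}^k$. The irreducible ones are the minimal nonzero ones, namely the one-dimensional $\mathbb{F}$-subspaces. This gives the bijection $V\mapsto V$, viewed inside $\mathbb{F}^k$.

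\textbf{Part (i).} I read the truncated statement as: the set $\{v,\gamma^p(v),\ldots,\gamma^{p^{d-1}}(v)\}$ is a $\mathbb{Z}_p$-basis of $V$. By (iii), $V=\mathbb{F}v$, and $\gamma^{p^j}(v)=\zeta^{p^j}v$. The $\mathbb{Z}_p$-span of these vectors is $\mathbb{Z}_p[\zeta^{p^j}: 0\le j<d]\,v$, which equals $\mathbb{F}v=V$ because $\zeta$ itself lies among the $\zeta^{p^j}$ (take $j=0$). Since $\dim_{\mathbb{Z}_p}V=d$ and there are $d$ vectors, spanning forces linear independence.

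\textbf{Main obstacle.} The delicate point is the spanning claim. The span of the Galois conjugates $\zeta^{p^j}$ need not be a subfield, so it need not automatically be all of $\mathbb{F}$; equivalently, independence of the conjugates is a normal-basis question. I would first try to prove directly that $\{\zeta^{p^j}\}$ spans $\mathbb{F}$ over $\mathbb{Z}_p$. If that fails, I would instead use the freedom in the statement ($\gamma\in{\rm GL}(U)$ is arbitrary) and choose $\gamma$ to be multiplication by a normal-basis generator of $\mathbb{F}/\mathbb{Z}_p$. Alternatively, I would fall back on the always-valid basis $\{v,\gamma(v),\ldots,\gamma^{d-1}(v)\}$, which works because $\zeta$ has minimal polynomial of degree $d$.

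\textbf{Part (ii).} Let $\pi_i:U\to U_i$ be the coordinate projections; they are module maps. For irreducible $V$, each $\pi_i(V)$ is a submodule of the irreducible $U_i$, so it is $0$ or $U_i$. Also, $\pi_i|_V$ is either zero or injective, since its kernel is a submodule of $V$. Let $\{i_1,\ldots,i_r\}$ be the indices with $\pi_i(V)\ne 0$. Then $V\le U_{i_1}\times\cdots\times U_{i_r}$ and projects onto each factor, so $V$ is a subdirect product of $U_{i_1},\ldots,U_{i_r}$. In the coordinates of (iii), this index set is just the support of a spanning vector of the line $V$.
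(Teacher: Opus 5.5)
\noindent\textbf{The gap is in (i).} Your main line says the $\mathbb{Z}_p$-span of the vectors $\zeta^{p^j}v$ is $\mathbb{Z}_p[\zeta^{p^j}]\,v$. It is only $(\sum_j\mathbb{Z}_p\zeta^{p^j})\,v$. Moreover, the ``direct'' route you want to try first is not merely delicate but false. Take $p=2$, $q=7$, $d=3$, and let $x$ act on $V=\mathbb{F}_8$ as multiplication by a root $\zeta$ of $X^3+X+1$. Then $\zeta+\zeta^2+\zeta^4=\mathrm{Tr}(\zeta)=0$, so $\gamma(v),\gamma^2(v),\gamma^4(v)$ are dependent for $\gamma=\theta(x)$.

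So the normal-basis choice is not a fallback; it is the proof. It works only because the statement lets $\gamma$ range over all of ${\rm GL}(U)$: on $U\cong\mathbb{F}^k$, take $\gamma$ to be scalar multiplication by a normal element $\beta$ of $\mathbb{F}/\mathbb{Z}_p$.

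You have also misindexed the set. It starts with $v=\gamma^0(v)$, not $\gamma(v)$; your ``take $j=0$'' produces $\gamma(v)$, which is not in the set. So the relevant coefficients are $1,\beta^p,\dots,\beta^{p^{d-1}}$. These still form a basis of $\mathbb{F}$. Indeed, $\mathrm{Tr}(\beta)=\sum_j\beta^{p^j}$ is a nonzero element of $\mathbb{Z}_p$ because the conjugates are independent. Hence $\beta=\mathrm{Tr}(\beta)\cdot 1-\sum_{j\ge 1}\beta^{p^j}$ lies in their span, which therefore contains the whole normal basis. Finally, $c\mapsto cv$ is a $\mathbb{Z}_p$-isomorphism $\mathbb{F}\to V=\mathbb{F}v$. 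Your other fallback, $\{v,\gamma v,\dots,\gamma^{d-1}v\}$, is a basis but proves a different statement.

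A smaller point: the trivial case does not ``reduce to linear algebra.'' For trivial $M$, $d>1$ and $k\ge 2$, part (iii) is false: $\mathbb{Z}_p^k$ has $(p^k-1)/(p-1)$ lines, while $\mathbb{F}^k$ has $(p^{dk}-1)/(p^d-1)$ $\mathbb{F}$-lines. So $M$ must be assumed non-trivial, as the paper tacitly does by choosing $x$ acting non-trivially.

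With these repairs your proof is correct. It differs from the paper mainly in (i) and (iii).
\begin{itemize}
\item Part (ii) is the same projection argument as the paper's.
\item For (iii) the paper only cites Theorem 27.14 of Aschbacher's book. Your Schur-lemma computation ($\mathbb{Z}_p[\bar Q]=\mathbb{Z}_p(\zeta)\cong\mathbb{F}$, and $U\cong\mathbb{F}^k$ with $Q$ acting by scalars) proves that citation in this setting. It also supplies $V=\mathbb{F}v$ and $\dim V=d$, which (i) needs.
\item For (i) the paper fixes $\gamma=\theta(x)$ and argues that a dependence would make the span of the $d$ vectors a proper submodule of $V$. But that span is never shown to be $\gamma$-stable, and it cannot be: $\gamma$ generates the action of $Q$ on $V$, so a nonzero $\gamma$-stable subspace of the irreducible $V$ is all of $V$. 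This is exactly the obstruction you flagged, and the $\mathbb{F}_8$ example shows such dependences genuinely occur for $\gamma$ coming from $Q$ (for the set of conjugates).
\end{itemize}
Your explicit model, with $\gamma$ chosen freely in ${\rm GL}(U)$, is what actually establishes (i) as stated.
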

    
    \begin{proof}
    	Let $\theta: Q \rightarrow {\rm GL}(U)$ be the homomorphism arises from the action of $Q$ on $U$. Then the image of $Q$ under $\theta$ is contained in the group ${\rm GL}(U_1) \times \cdots \times {\rm GL}(U_k)$. Let $\pi_i : \theta(Q) \rightarrow {\rm GL}(U_i)$ be the projection map and let $\theta_i = \pi_i \circ \theta: Q \rightarrow {\rm GL}(U_i)$. Then the image of $Q$ under $\theta_i$ is cyclic. Since $U_i \cong U_j$ for all $1 \leq i,j \leq k$ we have that $ker(\theta_i) = ker(\theta_j)$ for all $1 \leq i,j\leq k$. Suppose that $x \in Q$ acts non-trivially on $U_1$. Then $x$ acts non-trivially on $U_i$ for all $1 \leq i \leq k$. Let $\theta(x) = \gamma \in {\rm GL}(U)$.  Now if the set $\{v,\gamma^p(v),\ldots,\gamma^{p^{d-1}}(v)\}$ is linearly dependent then the vector space $span\{v,\gamma^p(v),\ldots,\gamma^{p^{d-1}}(v)\}$ becomes a proper ${\mathbb{Z}_p}Q$-submodule of $V$ a contradiction. 
    	
    	\medskip
    	Suppose that $V = \{v,\gamma^p(v),\ldots,\gamma^{p^{d-1}}(v)\}$ for some $\gamma \in {\rm GL}(U)$. Let $\eta_i: U \rightarrow U_i$ be the natural projection. Let $\{i_1,\ldots,i_r\}$ be the set such that $\eta_{i_j}(v) \not = 1$. Then $\eta_{i_j}: V \rightarrow U_{i_j}$ are $\mathbb{Z}_pQ$-module isomorphisms for $1 \leq j \leq r$. Since the set $\{v,\gamma^p(v),\ldots,\gamma^{p^{d-1}}(v)\}$ form basis for $V$ and $\eta_i$ for all $1 \leq i \leq k$ commute with $\theta$, the projection $\eta_j: V \rightarrow U_j$ is trivial for $j \in \{1,\ldots,k\} \setminus \{i_1,\ldots,i_r\}$. Therefore $V$ is the subdirect product of $U_{i_1},\ldots,U_{i_r}$.
    	
    	\medskip
    	Last part of this result follows from \cite[Theorem 27.14]{A1986} 
    \end{proof} 
    
	Let $G = P \rtimes Q$. There are two potential approaches to construct normal subgroups of $G$ of the form $\hat{P}\rtimes \hat{Q}$ where $\hat{P}\leq P$ and $\hat{Q}\leq Q$. First is to fix a subgroup $\hat{P}$ of $P$ and subsequently find a suitable subgroup $\hat{Q}$ of $Q$. Alternatively one can fist fix a subgroup $\hat{Q}\leq Q$ and then determine a suitable subgroup $\hat{P}$ of $P$. We incline towards the second approach as it aligns more naturally with the framework of Theorem \ref{normal_subgroups_in_the_variety}. 
	
	\medskip 
	A standard procedure for constructing a normal subgroup of $G$ of the form $\hat{P}\rtimes \hat{Q}$ is summarized in the following remark.
	
	\begin{rem}
		Let $\hat{Q}$ be a subgroup of $Q$. The matrices representing elements of $Q$ can be obtained from \cite[Theorem 2.1]{BKV} and consequently can be used to represent the matrices of elements of $\hat{Q}$. Proposition \ref{centralizers_of_a_subgroup_of_Q} then allows to compute the centralizer $C_P(\hat{Q})$ of $\hat{Q}$ in $P$. Furthermore by \cite[Theorem 27.14]{A1986} one can identify the $\mathbb{Z}_pQ$-submodules of $P$ and from this we can choose a $\mathbb{Z}_pQ$-submodule $\hat{P}$ of $P$ that contains a complement of $C_P(\hat{Q})$. By Theorem \ref{normal_subgroups_in_the_variety} the resulting subgroup $\hat{P}\rtimes \hat{Q}$ is normal in $G$. 
	\end{rem}

	
	
	We conclude this paper by providing the complete classification of normal subgroups of a finite group all of whose proper subgroups are nilpotent. Classification of such groups is due to O. J. Schmidt (see \cite{S1924}). Proof is straight forward and follows directly from Theorem \ref{normal_subgroup_of_a_split_extension}.
	
	\begin{thm}{\label{Normal_subgroups_of_a_group_whose_all_subgroups_are_nipotent}}
		Let $G = N \rtimes K$ where $|N| = p^{\alpha}$, $|K|=q^{\beta}$ and $K$ is cyclic be a finite minimal non-nilpotent group. Let $S$ be a proper normal subgroup of $G$. Then one of the following holds.
		
		\begin{enumerate}[$(\rm i)$]
			\item Either $S = N$ or $S \leq Z(N)$.
			\item $S \leq \Phi(K)$.
			\item $S = \hat{N} \times \hat{K}$ with $\hat{N} = N$ or $\hat{N} \leq Z(N)$ and $\hat{K} \leq \Phi(K)$.

		\end{enumerate}
		Conversely the subgroups of $G$ described in $(\rm i)$, $(\rm ii)$ and $(\rm iii)$ are normal in $G$. 
	\end{thm}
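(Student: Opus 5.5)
We apply Theorem \ref{normal_subgroup_of_a_split_extension} with $N$ and $Q=K$, since $\gcd(p^{\alpha},q^{\beta})=1$. We need the structural facts about a minimal non-nilpotent (Schmidt) group $G=N\rtimes K$ with $K=\langle x\rangle$ cyclic, as in \cite{S1924}:
\begin{enumerate}[(a)]
\item $N$ is the normal Sylow $p$-subgroup, and $N/\Phi(N)$ is an irreducible non-trivial $\mathbb{Z}_pK$-module.
\item $\Phi(K)=\langle x^q\rangle=C_K(N)=Z(G)\cap K$.
\item $\Phi(N)\leq Z(N)$, and $\Phi(N)=Z(N)\cap N'$-type facts; in particular every proper $K$-invariant normal subgroup of $N$ lies in $\Phi(N)$, and $K$ centralizes $\Phi(N)$.
\end{enumerate}
Both (b) and the last clause of (c) come from minimality: $\Phi(N)K$ and $N\langle x^q\rangle$ are proper subgroups, hence nilpotent, so the $q$-elements centralize the relevant $p$-parts. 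I would quote these rather than reprove them.

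\textbf{Case (i) of the Main Result.} Here $S$ is a $K$-invariant normal subgroup of $N$. By (a), the image of $S$ in $N/\Phi(N)$ is either $0$ or everything. If it is everything, then $S\Phi(N)=N$, so $S=N$. Otherwise $S\leq\Phi(N)\leq Z(N)$. This gives (i). For the converse, a subgroup $S\leq Z(N)$ need not be $K$-invariant in general, so the converse should be read with $S\leq \Phi(N)$, on which $K$ acts trivially.

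\textbf{Case (ii) of the Main Result.} Here $S\leq C_K(N)=\Phi(K)$ by (b). This gives (ii).

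\textbf{Case (iii) of the Main Result.} Here $S=\hat N\rtimes\hat K$ with $\hat K\neq 1$ a subgroup of $K$, and $\hat N$ normal in $G$ containing a transversal of $C_N(\hat K)$. Since $S$ is proper, $\hat K$ cannot equal $K$: if it did, $\hat N$ would contain a transversal of $C_N(K)$, and then $\hat N\cdot C_N(K)=N$. Because $C_N(K)\leq\Phi(N)$ (as $K$ acts non-trivially on $N/\Phi(N)$), this forces $\hat N=N$ and so $S=G$. Hence $\hat K\leq\Phi(K)$, which centralizes $N$. Then the product is direct, and $C_N(\hat K)=N$, so the transversal condition is vacuous. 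By case (i), $\hat N=N$ or $\hat N\leq Z(N)$. This gives (iii).

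\textbf{Converse.} The converse follows from Proposition \ref{Normal_subgroups}, since $[N,\hat K]=1$ for $\hat K\leq\Phi(K)$.

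\textbf{Main obstacle.} I expect the main difficulty to be justifying the Schmidt structure facts, especially that $K$-invariant proper subgroups lie in $\Phi(N)\leq Z(N)$ and that $C_N(K)\leq\Phi(N)$.
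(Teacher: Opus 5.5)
Your proposal is correct and takes the same route as the paper. The paper only asserts that the result follows directly from Theorem~\ref{normal_subgroup_of_a_split_extension}; you supply the Schmidt-group facts it leaves implicit ($N/\Phi(N)$ irreducible, $C_K(N)=\Phi(K)$, $C_N(K)\le\Phi(N)\le Z(G)$), and your use of properness to exclude $\hat K=K$ is exactly what the mixed case needs.

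Your caveat on the converse is also justified and is not a gap in your argument. If $N$ is abelian then $Z(N)=N$, and in $A_4=V_4\rtimes C_3$, for example, the subgroups of order $2$ are not normal. For nonabelian $N$ one has $Z(N)=\Phi(N)$, so reading the converse with $\Phi(N)$ in place of $Z(N)$ is the right repair.
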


	\section*{Acknowledgment}
	The author would like to thank {\emph{Prof Geetha Venkataraman}} and her student and my research fellow {\emph{Dr. Arushi}} for suggesting this problem and encouragement during the preparation of this work.

    \end{document}